\def\RR{\mathbb R}
\def\hh{\mathcal H}
\def\gg{\mathcal G}
\def\aa{\mathcal A}
\def\bb{\mathcal B}
\providecommand{\abs}[1]{\lvert#1\rvert}
\providecommand{\norm}[1]{\lVert#1\rVert}
\newcommand{\remove}[1]{ }
\newtheorem{theorem}{Theorem}[section]
\newtheorem{proposition}[theorem]{Proposition}
\newtheorem{lemma}[theorem]{Lemma}
\theoremstyle{definition}
\newtheorem*{definition}{Definition}
\theoremstyle{remark}
\newtheorem*{remark}{Remark}
\numberwithin{equation}{section}
\begin{document}
\title[Pointwise stabilization and  control of coupled system]{Theorical and Numerical analysis of the rapid Pointwise stabilization of coupled string-beam systems}


   \author{Alia BARHOUMI}
   \address{ Institut Sup\'erieur d'Informatique et de Math\'ematiques de Monastir, Avenue de la Corniche, 
  , 5000 Monastir, Tunisie.\\
          Universit\'e de Monastir\\
          Tunisie }
   \email{Alia.Barhoumi@isimm.rnu.tn}


   \author{Abdelkader SA\"{I}DI}

   \address{Institut de Rechrche Math\'ematique Avanc\'ee, Universit\'e de Strasbourg, 7 Rue Ren\'e Descartes, 
   67084 Strasbourg, France}

    \email{saidi@math.unistra.fr}

\begin{abstract}
We consider a pointwise stabilization problem for a coupled wave and plate equations. We prove   under rather general assumptions, that such systems   can stabilized so as to have arbitrarily high decay rates and are exactly controllable. We propose a numerical approximation of the model and we study numerically the construction of the feedbak law leading to exponential decay with arbtrarily large rate.
  
\end{abstract}

\maketitle

\section{Introduction}\label{s1}

 Let $\xi,\eta \in (0,\pi)$ are given points, fix four real numbers  $A$, $B$, $C$, $D$ and  consider the coupled string-beam system, more precisely we have the following partial differential equations with pointwise dissipation:
\begin{equation}
\label{13}
\begin{cases}
y_{1,tt}-y_{1,xx}+Ay_1+Cy_2=v_1(t)\delta_{\xi }\quad&\text{in $\RR\times (0,\pi)$,} \\
y_{2,tt}+y_{2,xxxx}+By_1+Dy_2=v_2(t)\delta_{\eta }\quad&\text{in $\RR\times (0,\pi)$,} \\
y_1(t,0)=y_1(t,\pi)=0&\text{for  $t\in{\RR}$,} \\
y_2(t,0)=y_2(t,\pi)=0&\text{for  $t\in{\RR}$,} \\
y_{2,xx}(t,0)=y_{2,xx}(t,\pi)=0&\text{for  $t\in{\RR}$,} \\
y_1(0,x)=y_{10}(x)\text{ and } y_{1,t}(0,x)=y_{11}(x)&\text{for  $x\in (0,\pi)$},\\ 
y_2(0,x)=y_{20}(x)\text{ and } y_{2,t}(0,x)=y_{21}(x)&\text{for  $x\in (0,\pi)$}
\end{cases}
\end{equation}
where  $v_1(t),v_2(t)$ are the control functions  in $L^2_{loc}(\RR)$,  $\delta_{\xi }$ and $\delta_{\eta }$ denotes the Dirac mass at some given points $\xi$ and $\eta \in (0,\pi).$

The coupled structual model has been of great interest in recent year; for details about the physical motivation for the model see \cite{LasTri2000}, \cite{KomLor1998}  and the references therein. Mathematical analysis of coupled partial differential equations is detailed in \cite{Kom1997}, \cite{Lio1988}. The question of controllability and stabilization for such models has been widely treated in a series of relevant works  \cite{KaiTuc},\cite{ammari1},\cite{ammari2},\cite{ammari3}, \cite{Kom72}, \cite{LasTri2000}, \cite{Lag1989}, \cite{Rus1978}. Many works were devoted to the construction of explicit feedback laws and to the proof of exponential decay by different methods; see, e.g., \cite{Lio1988}, \cite{Kom72}, \cite{Kom1997}.    It is known that this type of feedback does not yield  arbitrarily large decay rates. It was pointed out earlier by Haraux and Jaffard \cite{Har1990}, \cite{Har1994}, \cite{HarJaf1991} that the observability and controllability properties depend heavily on the location of the observation of control point. For the stabilization another difficulty appears because the suitable function spaces, as we will show, are not Sobolev spaces.

In this paper we apply another approach for the stabilization of the coupled string-beam system \eqref{13}, which similar in sprit to the HUM. This method, developed by Komornik \cite{Kom1997}  is as general as the former one; however, it provides stronger results with simpler proofs and 
it's the first time that we apply this method to prove the exponential stability of the coupled sting- beam systems with pointwise control.  The main result of this paper is to introduce functions spaces depending on the arithmetical properties of the stabilization point and to give an estimate on exponential decay that is valid for regular initial data, as a result we will construct pointwise feedbacks leading to arbitrarily large prescribed decay rates. Numerical tests and explicit construction of the feedback are presented.

The method used is based on a regularity results combined with an observability inequality for the corresponding  undamped problem. See \cite{Kom1997}.

The paper is organised as follows. The statement and the proof of the main results are given in the  sections 2 and 3 respectively. The last section  is devoted to the numerical approximation of the coupled string-beam system and the explicit construction of the feedback law.

\section{Statement of the main result}\label{s2}

In order to formulate our result,  we assume that $\xi/\pi$ and $\eta/\pi$ are irrational, so that $\sin k \xi$ and $\sin k \eta$,  don't vanish for any $k=1,2,\ldots ,$ we
denote by $Z$ the linear hull of the functions $w_k(x):=\sqrt{2/\pi}\sin kx$, $k=1,2,\ldots ,$ and we denote by $D^{\alpha}_{\xi}$ and  $(D^{\alpha}_{\beta})'$ for every $\alpha\in \RR$ and $\beta\in \left\{\xi,\,\,\eta\right\}$ the Hilbert spaces obtained by completing $Z$ with respect to norms given by the following formulae:
\begin{align*}
\Bigl\Vert\sum a_kw_k\Bigr\Vert_{D^{\alpha}_{\beta}}^2&:=\sum k^{2\alpha}\sin^{2} (k \beta)\abs{a_k}^2,\\
\Bigl\Vert\sum a_kw_k\Bigr\Vert_{(D^{\alpha}_{\xi})'}^2&:=\sum k^{-2\alpha}\sin^{-2} (k \beta)\abs{a_k}^2.
\end{align*}

If we identify $L^2(0,\pi)$ with its dual and take into account that
\begin{equation*}
\norm{\sum a_kw_k}_{L^2(0,\pi)}^2:=\sum \abs{a_k}^2,
\end{equation*}
then $(D^{\alpha}_{\xi})'$ is the dual space of $D^{\alpha}_{\xi}$.

Fix $\xi, \eta\in (0,\pi)$ such that $\xi /\pi$ and $\eta/\pi$ are irrational and introduce the Hilbert space
\begin{equation*}
\hh_{\xi,\eta}:=(D^0_{\xi }\times D^{-1}_{\xi })'\times(D^{0}_{\eta }\times D^{-2}_{\eta })'.
\end{equation*}

The problem \eqref{13} is well posed in the Hilbert space $\hh_{\xi,\eta}$ in the following sense:

\begin{proposition}\label{p1}
for any given initial and final data
\begin{equation*}
(y_{10},y_{11},y_{20},y_{21})\in\mathcal{H}_{\xi,\eta} 
\end{equation*}
\noindent and
\begin {equation*}
v_1,v_2\in L^2(0,T;\mathcal{H}_{\xi,\eta})
\end{equation*}
The system has a unique weak solution satisfying
\begin{equation*}
(y_{1},y_{1,t},y_{2},y_{2,t})\in\mathcal{C}([0, T];\hh_{\xi,\eta}), 
\end{equation*}
and the linear mapping 
\begin{equation*}
(y_{10},y_{11},y_{20},y_{21},v_1,v_2)\mapsto (y_{1},y_{1,t},y_{2},y_{2,t})  
\end{equation*}
is continuous with  these topologies.
\end{proposition}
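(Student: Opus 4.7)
The plan is to prove well-posedness by the transposition (duality) method of Lions, exploiting the fact that $\hh_{\xi,\eta}$ is by construction the dual of the test space $V_{\xi,\eta} := D^0_\xi\times D^{-1}_\xi\times D^0_\eta\times D^{-2}_\eta$. (I read the condition ``$v_1,v_2\in L^2(0,T;\hh_{\xi,\eta})$'' as a misprint for $v_1,v_2\in L^2(0,T)$, since the controls are scalar coefficients of Dirac masses.) To set up the adjoint problem, I expand each component in the basis $\set{w_k}$; the uncoupled wave--plate operator becomes a diagonal skew-adjoint generator on $V_{\xi,\eta}$, and the coupling matrix $\bigl(\begin{smallmatrix}A&C\\B&D\end{smallmatrix}\bigr)$ acts as a bounded perturbation, so standard semigroup theory yields a $C_0$-group $(S(t))_{t\in\RR}$ on $V_{\xi,\eta}$ together with explicit Fourier expansions of the adjoint solution $(\phi_1,\phi_2)$, whose time-frequencies $\omega_k^{\pm}$ behave asymptotically like $\pm k$ (string branch) and $\pm k^2$ (plate branch).

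The decisive step is the direct (hidden) trace estimate
\begin{equation*}
\int_0^T \bigl(|\phi_1(t,\xi)|^2+|\phi_2(t,\eta)|^2\bigr)\,dt \le C_T\bigl\|(\phi_{10},\phi_{11},\phi_{20},\phi_{21})\bigr\|_{V_{\xi,\eta}}^2.
\end{equation*}
Since $w_k(\beta)=\sqrt{2/\pi}\sin(k\beta)$, the point traces take the form $\sum_k c_k\sin(k\beta)e^{i\omega_k^{\pm}t}$, and the weights $\sin^2(k\beta)$ built into the norms $\|\cdot\|_{D^0_\beta}$ are tailored precisely to absorb the $\sin(k\beta)$ factors; once the $k^{-2}$ and $k^{-4}$ weights sitting in the velocity components are taken into account, the $V_{\xi,\eta}$-norm matches, term by term, the $\ell^2$-sum produced by an Ingham-type inequality applied to the above series. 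This is the step I expect to be the main obstacle: one needs a version of Ingham's theorem handling two families of very different growth rates (Haraux's generalization for asymptotically separated sequences is tailored to this situation), together with a compactness argument for the finite low-frequency block in which the branches may interact. The irrationality of $\xi/\pi$ and $\eta/\pi$ intervenes only through its guarantee that no weight $\sin(k\beta)$ vanishes.

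Once the trace estimate is established, the weak solution $(y_1,y_{1,t},y_2,y_{2,t})(t)$ is defined in $\hh_{\xi,\eta}$ by the transposition identity
\begin{equation*}
\bigl\langle (y_1,y_{1,t},y_2,y_{2,t})(t),\Phi(t)\bigr\rangle = \bigl\langle (y_{10},y_{11},y_{20},y_{21}),\Phi(0)\bigr\rangle + \int_0^t\bigl(v_1(s)\phi_1(s,\xi)+v_2(s)\phi_2(s,\eta)\bigr)\,ds,
\end{equation*}
where $\Phi(t):=S(-t)(\phi_{10},\phi_{11},\phi_{20},\phi_{21})$ ranges over $V_{\xi,\eta}$ and the right-hand side is continuous in $\Phi$ by the trace estimate and Cauchy--Schwarz. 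Uniqueness comes from the density of smooth test data in $V_{\xi,\eta}$, and the continuity $t\mapsto(y_1,y_{1,t},y_2,y_{2,t})\in\mathcal{C}([0,T];\hh_{\xi,\eta})$ is obtained by first regularizing $(v_1,v_2)$ to get a classical strong solution via semigroup theory and then passing to the limit using the uniform bound just proved; the same estimate delivers the joint continuity of the data-to-solution map.
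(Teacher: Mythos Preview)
Your proposal is correct and follows essentially the same route as the paper: rewrite the system abstractly, verify a hidden-regularity (direct trace) estimate for the adjoint group via the Haraux/Ingham machinery, and define the weak solution by transposition through the duality pairing between $\hh_{\xi,\eta}$ and $V_{\xi,\eta}$. Your reading of the control regularity as a misprint for $v_1,v_2\in L^2(0,T)$ is confirmed by the paper's later formulation $u=(v_1,v_2)\in L^2(0,T;\RR^2)$, and your treatment of the coupling as a bounded perturbation of the diagonal generator is in fact more careful than the paper's own computation, which writes the adjoint solutions as if $A=B=C=D=0$ and then invokes \cite{Har1994} directly.
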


We shall study the controllability of the system.

\begin{definition}
Fix $\xi,\,\,\eta\in (0,\pi)$ such that $\xi/\pi$ and $\eta/\pi$ are irrational. the system \eqref{13} is exactly controllable if for any given initial and final data
\begin{equation*}
(y_{10},y_{11},y_{20},y_{21})\in\mathcal{H}_{\xi,\eta} 
\end{equation*}
\noindent and
\begin{equation*}
(z_{10},z_{11},z_{20},z_{21})\in\mathcal{H}_{\xi,\eta}
\end{equation*}
\noindent there exist control functions
\begin {equation*}
v_1,v_2\in L^2(0,T;\mathcal{H}_{\xi,\eta})
\end{equation*}
\noindent such that the corresponding solution of \eqref{13} satisfies the final condition
\begin{equation*}
(y_1,y_{1t},y_2,y_{2t})(T)=(z_{10},z_{11},z_{20},z_{21}).
\end{equation*}
\end{definition}

\begin{theorem}\label{t1}
If $T>2\pi,$ then the system \eqref{13} is exactly controllable for almost all choices of $(A,B,C,D)\in\RR.$
\end{theorem}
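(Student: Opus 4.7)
The plan is to apply the Hilbert Uniqueness Method. Fix $T>2\pi$ and let $(\phi_1,\phi_2)$ solve the homogeneous adjoint system obtained from \eqref{13} by dropping the right-hand sides and transposing the coupling,
\begin{equation*}
\phi_{1,tt}-\phi_{1,xx}+A\phi_1+B\phi_2=0,\qquad \phi_{2,tt}+\phi_{2,xxxx}+C\phi_1+D\phi_2=0,
\end{equation*}
with the same boundary conditions and initial data $(\phi_{10},\phi_{11},\phi_{20},\phi_{21})$ in the predual $D^{0}_{\xi}\times D^{-1}_{\xi}\times D^{0}_{\eta}\times D^{-2}_{\eta}$ of $\hh_{\xi,\eta}$. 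By duality, exact controllability of \eqref{13} in time $T$ is equivalent to proving the observability inequality
\begin{equation*}
\Vert(\phi_{10},\phi_{11},\phi_{20},\phi_{21})\Vert^2\leq C_T\int_0^T \abs{\phi_1(t,\xi)}^2+\abs{\phi_2(t,\eta)}^2\, dt.
\end{equation*}

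Expanding $\phi_1=\sum_k a_k(t)w_k$ and $\phi_2=\sum_k b_k(t)w_k$ in the orthonormal basis $\set{w_k}$ diagonalizes the spatial operators and reduces the PDE to a countable family of $2\times 2$ oscillators
\begin{equation*}
\ddot a_k+(k^2+A)a_k+Bb_k=0,\qquad \ddot b_k+(k^4+D)b_k+Ca_k=0,
\end{equation*}
governed by $M_k=\begin{pmatrix} k^2+A & B\\ C & k^4+D\end{pmatrix}$. By standard perturbation theory the eigenvalues $\lambda_{k,1}\leq \lambda_{k,2}$ of $M_k$ are real and positive for $k$ large enough and satisfy $\lambda_{k,1}=k^2+A+O(k^{-2})$, $\lambda_{k,2}=k^4+D+O(k^{-2})$. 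Writing $\mu_{k,j}=\sqrt{\lambda_{k,j}}$, the wave-like branch $\mu_{k,1}=k+A/(2k)+O(k^{-3})$ has asymptotic gap $1$ between consecutive frequencies; the beam-like branch $\mu_{k,2}\sim k^2$ has gaps growing like $2k$; and the two branches separate from each other as $k\to\infty$.

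Each possible coincidence $\mu_{k,j}=\mu_{k',j'}$ with $(k,j)\neq(k',j')$, as well as each instance of a non-positive or double eigenvalue of $M_k$, is cut out by a nontrivial polynomial equation in $(A,B,C,D)$, hence defines a hypersurface of Lebesgue measure zero; the countable union over all such exceptional events remains null. Outside this null set the minimum of the finite low-frequency portion is strictly positive, and combined with the asymptotic gap it yields a uniform separation constant $\gamma>0$ for the full family $\{\pm\mu_{k,j}\}_{k\geq 1,\, j=1,2}$, arbitrarily close to $1$. Since $T>2\pi$, a generalized Ingham inequality of Komornik type \cite{Kom1997} applies and gives
\begin{equation*}
\int_0^T\Bigl|\sum_{k,j} c_{k,j}e^{i\mu_{k,j}t}\Bigr|^2 dt\gtrsim \sum_{k,j}\abs{c_{k,j}}^2.
\end{equation*}

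It remains to express the observations as such non-harmonic Fourier series and identify the weights. Diagonalizing $M_k$ in an eigenbasis $P_k$ that converges to the identity as $k\to\infty$, the traces $\phi_1(t,\xi)=\sqrt{2/\pi}\sum_k a_k(t)\sin(k\xi)$ and $\phi_2(t,\eta)=\sqrt{2/\pi}\sum_k b_k(t)\sin(k\eta)$ take the form $\sum_{k,j}(\alpha_{k,j}\sin(k\xi)+\beta_{k,j}\sin(k\eta))e^{i\mu_{k,j}t}$, the scalar coefficients $\alpha_{k,j},\beta_{k,j}$ being built from the entries of $P_k$. The irrationality of $\xi/\pi$ and $\eta/\pi$ guarantees that $\sin(k\xi)\sin(k\eta)\neq 0$ for every $k\geq 1$, and Ingham's inequality delivers the observability in norms weighting each mode by $\sin^2(k\xi)$ or $\sin^2(k\eta)$ — precisely the $D^0_\xi, D^{-1}_\xi, D^0_\eta, D^{-2}_\eta$ norms appearing in the predual of $\hh_{\xi,\eta}$. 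The main obstacle I expect is the uniform spectral gap together with the clean separation of the two observation channels at small $k$, where the eigenvectors of $M_k$ mix the wave and beam components nontrivially; this is precisely the step that forces the exclusion of the measure-zero set of bad parameters $(A,B,C,D)$.
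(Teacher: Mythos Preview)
Your overall strategy---HUM reduction to observability of the adjoint, mode-by-mode $2\times 2$ diagonalization, eigenvalue asymptotics, and an Ingham-type inequality---matches the paper's approach in spirit. However, there is a genuine gap in the spectral step.

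You assert that, off a null set of parameters, the combined frequency family $\{\pm\mu_{k,j}\}_{k\ge 1,\,j=1,2}$ has a uniform gap $\gamma>0$ close to $1$, and then invoke a scalar Ingham inequality. This is false, and not for a low-frequency reason. From your own asymptotics $\mu_{k,1}=k+\tfrac{A}{2k}+O(k^{-3})$ and $\mu_{k,2}=k^{2}+\tfrac{D}{2k^{2}}+O(k^{-6})$, setting $k=m^{2}$ in the first branch gives
\[
\mu_{m^{2},1}-\mu_{m,2}=\frac{A-D}{2m^{2}}+O(m^{-6})\longrightarrow 0\qquad(m\to\infty).
\]
Thus for \emph{every} choice of $(A,B,C,D)$ the two branches interlace with asymptotically vanishing separation, and no scalar Ingham inequality on the merged family can hold. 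Excluding a measure-zero set of parameters cannot repair this: the clustering is asymptotic and persists generically.

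The correct tool here is the vector-valued Ingham theorem of Komornik and Loreti \cite{KomLor1998}, which the paper invokes (and which is the source of the ``almost all $(A,B,C,D)$'' caveat). That result allows pairs of close frequencies provided the associated coefficient vectors in the two-component observation $(\phi_1(t,\xi),\phi_2(t,\eta))\in\RR^{2}$ stay uniformly linearly independent. Since the eigenvectors of $M_k$ tend to the standard basis, the near-coincident pair $(\mu_{m^{2},1},\mu_{m,2})$ carries directions close to $(1,0)$ and $(0,1)$, which is exactly what is needed. Your final paragraph correctly senses that the eigenvector structure matters, but misdiagnoses it as a small-$k$ issue; the real work is at infinity, and it requires the two-channel observation in an essential way rather than a scalar Ingham estimate.
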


Finally, we are looking for stabilizing feedbck laws of the form 
\begin{equation}
(v_1,v_2)(t):=((P_1y_{1t}+Q_1y_1)(t,\xi ),(P_2y_{2t}+Q_2y_2)(t,\eta )),
\end{equation}
leading to arbitrarily high decay rates.

\begin{theorem}\label{t2}
Fix $\xi$ and $ \eta\in (0,\pi)$ such that $\xi /\pi$ and $\eta/\pi$ are irrational.  
For almost all   choices of $(A,B,C,D)\in \RR^4$ and for every positive number $\omega$ there exist two linear operators
\begin{equation*}
(P_1,Q_1,P_2,Q_2):\hh_{\xi,\eta}\to D^{-1}_{\xi }\times D^{-2}_{\eta },
\end{equation*}
and a positive constant $M$ such that 
the problem \eqref{13} is well posed in $\hh_{\xi,\eta}$ and its solutions satisfy the inequality
\begin{equation}\label{ec}
\norm{(y_1,y_{1t},y_2,y_{2t})}_{\hh_{\xi,\eta}}\le Me^{-\omega t}\norm{(y_{10},y_{11},y_{20},y_{21})}_{\hh_{\xi,\eta}}
\end{equation}
for all $(y_{10},y_{11},y_{20},y_{21})\in \hh_{\xi,\eta}$ and $t\geq 0$
\end{theorem}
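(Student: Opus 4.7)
The plan is to cast \eqref{13} abstractly as $U_t=\aa U+Bv$ on $\hh_{\xi,\eta}$, with $U=(y_1,y_{1t},y_2,y_{2t})$ and $Bv=(0,v_1\delta_\xi,0,v_2\delta_\eta)$, and then apply Komornik's HUM-type construction \cite{Kom1997}. Given $\omega>0$, I would introduce the weighted controllability Gramian
\begin{equation*}
L_\omega:=\int_0^{+\infty}e^{-2\omega t}\,e^{t\aa}BB^{\ast}e^{t\aa^{\ast}}\,dt:\hh_{\xi,\eta}\to\hh_{\xi,\eta},
\end{equation*}
set the closed-loop feedback $v=-B^{\ast}L_\omega^{-1}U$ (which automatically provides a tuple $(P_1,Q_1,P_2,Q_2):\hh_{\xi,\eta}\to D^{-1}_\xi\times D^{-2}_\eta$ of the required type), and show that $L_\omega$ is bounded, symmetric and coercive on $\hh_{\xi,\eta}$. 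The standard dissipativity identity $\frac{d}{dt}\langle L_\omega^{-1}U,U\rangle\le -2\omega\langle L_\omega^{-1}U,U\rangle$, combined with the two-sided equivalence between $\langle L_\omega^{-1}U,U\rangle$ and $\norm{U}_{\hh_{\xi,\eta}}^2$, then yields \eqref{ec}.

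The analytic core is a weighted observability estimate for the homogeneous adjoint system. Expanding in the basis $w_k$, the problem decouples mode by mode into a $2\times 2$ Hamiltonian system for $(a_k,b_k)$, whose eigenfrequencies form two families $\mu_k^{\pm}(A,B,C,D)$, perturbations of the wave branch $k$ and the beam branch $k^2$. Consequently, adjoint solutions admit non-harmonic Fourier expansions
\begin{equation*}
\phi_j(t,x)=\sum_{k,\pm}\alpha^{(j)}_{k,\pm}e^{i\mu_k^{\pm}t}w_k(x),\qquad j=1,2,
\end{equation*}
so the pointwise traces $\phi_1(\cdot,\xi)$ and $\phi_2(\cdot,\eta)$ become exponential sums whose $k$-th amplitudes carry the factors $\sin(k\xi)$ and $\sin(k\eta)$. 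The weights $k^{2\alpha}\sin^{2}(k\beta)$ in the definition of $D^\alpha_\beta$ are tailored precisely to convert a Parseval identity on the trace side into a norm on the predual of $\hh_{\xi,\eta}$. Theorem~\ref{t1} supplies the unweighted Ingham-type observability valid for $T>2\pi$; I would then upgrade it to the weighted form
\begin{equation*}
\int_0^{+\infty}e^{2\omega t}\bigl(\abs{\phi_1(t,\xi)}^2+\abs{\phi_2(t,\eta)}^2\bigr)\,dt\ge c_\omega\norm{\Phi(0)}^2,
\end{equation*}
which is exactly the inequality that bounds $L_\omega^{-1}$.

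The hard part is the number-theoretic step behind this weighted inequality. One needs a uniform lower bound on the gaps $\abs{\mu_k^{\pm}-\mu_{k'}^{\pm'}}$ that is strong enough to absorb the exponential weight $e^{2\omega t}$ in a generalized Ingham / Haraux--Jaffard theorem, together with the absence of resonances between the wave branch and the beam branch. For almost every $(A,B,C,D)\in\RR^4$ the perturbed frequencies will satisfy such a gap condition: a Fubini argument on the parameters, combined with a Borel--Cantelli-type bookkeeping of small denominators, excludes the measure-zero set on which eigenvalues coalesce or fail the gap. Once the weighted observability is secured, the rest is essentially formal: $L_\omega$ is bounded and coercive, the closed-loop operator $\aa-BB^{\ast}L_\omega^{-1}$ generates a $C_0$-semigroup on $\hh_{\xi,\eta}$, a regularity bootstrap analogous to \cite{Kom1997} places the range of $L_\omega^{-1}$ in the domain of the trace maps so that the feedback is well defined, and $\langle L_\omega^{-1}U(t),U(t)\rangle$ furnishes a Lyapunov functional decreasing like $e^{-2\omega t}$, which delivers \eqref{ec}.
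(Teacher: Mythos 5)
Your overall strategy coincides with the paper's: both proofs are Komornik-type rapid stabilization, in which the dual problem \eqref{31a} is put in the abstract form $U'=\aa^*U$, $\psi=\bb^*U$, an exponentially weighted observability Gramian is shown to be a coercive isomorphism, and the feedback operators $(P_1,Q_1,P_2,Q_2)$ are read off from the matrix form of its inverse. The paper does this by verifying the hypotheses (H1)--(H4) (Proposition \ref{PA}, with the $T>2\pi$ Ingham--Haraux equivalence \eqref{32a} giving the observability in the $D^\alpha_\beta$-norms, the almost-every $(A,B,C,D)$ restriction coming from the Komornik--Loreti theory behind Theorem \ref{t1}) and then quoting the abstract Theorem \ref{t5} of \cite{Kom1997}, whose Gramian uses the truncated weight $e_\omega$ on the finite interval $[0,T_\omega]$; your infinite-horizon weight $e^{-2\omega t}$ is the older variant of \cite{Kom1995}, which the paper uses only in the numerical Section 4. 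That variant does work here (the underlying group is bounded, so the Gramian converges), so the difference is one of packaging rather than substance; what the truncated weight buys is the prescribed rate with no convergence discussion, what yours buys is the formula actually implemented numerically.

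Two points in your sketch are off and should be repaired, although neither destroys the argument. First, the inequality you call the hard part is stated with the growing weight $e^{2\omega t}$, whereas coercivity of $L_\omega$ needs the decaying-weight estimate $\int_0^{\infty}e^{-2\omega t}\bigl(\abs{\phi_1(t,\xi)}^2+\abs{\phi_2(t,\eta)}^2\bigr)dt\ge c\norm{\Phi(0)}^2$; this follows immediately from the finite-time observability on $[0,T]$, $T>2\pi$, by bounding $e^{-2\omega t}\ge e^{-2\omega T}$ there. Hence no strengthened gap condition, no Borel--Cantelli bookkeeping to ``absorb'' the weight: the only genuinely delicate ingredient is the unweighted observability for almost every $(A,B,C,D)$, i.e. hypothesis (H4)/Theorem \ref{t1}, which the paper imports from \cite{KomLor1998} and \cite{alia1} rather than reproving. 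Second, for the Lyapunov identity $\frac{d}{dt}\langle L_\omega^{-1}U,U\rangle\le-2\omega\langle L_\omega^{-1}U,U\rangle$ to come out with the right sign, the Gramian must be built on the time-reversed (adjoint) group, $\int_0^{\infty}e^{-2\omega s}e^{-s\aa}BB^{*}e^{-s\aa^{*}}ds$ --- equivalently on the dual problem \eqref{31a}, which is exactly why the paper's Theorem \ref{t5} couples the Gramian formed with $e^{s\aa}$ to the closed-loop generator $-\aa-\bb J\bb^{\star}\Lambda_\omega^{-1}$; with the forward orientation as you wrote it, the algebraic identity $\aa L_\omega+L_\omega\aa^{*}=2\omega L_\omega-BB^{*}$ gives the wrong sign in front of $2\omega$. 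With these two corrections (or by simply invoking Theorem \ref{t5} after checking (H1)--(H4), as the paper does) your proof closes.
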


\begin{remark}
It follows from some results of Komornik and Loreti that the system \eqref{13} can not be exactly controllable for some exceptional choices of the parameters $A$, $B$, $C$, $D$: see \cite{KomLor1998} and \cite{KomLorBook} for explicit counter examples concerning an equivalent observability problem.
\end{remark}

The proofs are based on the study of the dual problem

\begin{equation}\label{31a}
\begin{cases}
u_{1tt}-u_{1xx}+Au_1+Cu_2=0\quad&\text{in $\RR\times (0,\pi)$,} \\
u_{2tt}+u_{2xxxx}+Bu_1+Du_2=0\quad&\text{in $\RR\times (0,\pi)$,} \\
u_1(t,0)=u_1(t,\pi)=0&\text{for $t\in\RR$,} \\
u_2(t,0)=u_2(t,\pi)=0&\text{for $t\in\RR$,} \\
u_{2xx}(t,0)= u_{2xx}(t,\pi)=0&\text{for $t\in\RR$,} \\
u_1(0,x)=u_{10}(x)\text{ and } u_{1t}(0,x)=u_{11}(x)&\text{for $x\in (0,\pi)$,}\\
u_2(0,x)=u_{20}(x)\text{ and } u_{2t}(0,x)=u_{21}(x)&\text{for $x\in (0,\pi)$,}\\
\psi(t)=u_1(t,\xi )+u_2(t,\xi )&\text{for $t\in\RR$}.
\end{cases}
\end{equation}

We prove that under some conditions this dual problem is observable. Feedbacks of this type are important for the engineering applications: as we will show in this paper, on various numerical aspect of these feddbacks, and we can see the works of Bourquin et al. \cite{Bour.et.al} on physiscal experiences.

\section{Proof of the main results}\label{s3}

We consider the abstract observability problem \eqref{31a}, if the initial data are given by the formula

\begin{align*}
u_{10}(x)&=\sum_{k=1}^{\infty}a_k\sin kx,
&u_{11}(x)=\sum_{k=1}^{\infty}b_k\sin kx,
\intertext{and}
u_{20}(x)&=\sum_{k=1}^{\infty}\alpha_k\sin kx,
&u_{21}(x)=\sum_{k=1}^{\infty}\beta_k\sin kx
\end{align*}
with only finitely many non vanishing coefficients $a_k$  $b_k$,  $\alpha_k$  $\beta_k,$ then a simple computation shows that
\begin{align*}
&u_1(t,x)=\sum_{k=1}^{\infty}(c_k e^{ikt}+c_{-k}e^{-ikt})\sin kx
\intertext{and}
&u_2(t,x)=\sum_{k=1}^{\infty}(d_k e^{ik^2t}+d_{-k}e^{-ik^2t})\sin kx
\end{align*}
with 
\begin{align*}
&c_k=\frac{1}{2}(a_k-i\frac{b_k}{k}), &c_{-k}=\frac{1}{2}(a_k+i\frac{b_k}{k}),
\intertext{and}
&d_k=\frac{1}{2}(\alpha_k-i\frac{\beta_k}{k^2}),&d_{-k}=\frac{1}{2}(\alpha_k +i\frac{\beta_k}{k^2}).
\end{align*}

If $T> 2\pi$, then using Parseval's equality and a result of Haraux \cite{Har1994} it follows that
\begin{equation*}
\int_0^T(|u_1(t,\xi )|^2+|u_2(t,\eta )|^2 )dt\asymp
\end{equation*}
\begin{equation*}
\sum_{k=1}^{\infty}\left( \abs{a_k}^2+k^{-2}\abs{b_k}^2\right)\sin^2 k\xi +\left(\abs{\alpha_k}^2+k^{-4}\abs{\beta_k}^2 \right) \sin^2 k\eta  .
\end{equation*}
It can be rewritten in the form
\begin{multline}\label{32a}
\int_0^T|u_1(t,\xi )|^2+|u_2(t,\eta )|^2 dt \\
\asymp
\norm{u_{10}}_{D_{\xi }^0}^2+\norm{u_{20}}_{D^0_{\eta }}^2+\norm{u_{11}}_{(D_{\xi }^{-1})}
+\norm{u_{21}}_{D_{\eta }^{-2}}^2.
\end{multline}

We rewrite \eqref{31a} as a first-order system 
\begin{equation}\label{a2}
U'=\aa^*U,\quad U(0)=U_0,\quad \psi=\bb^*U
\end{equation}
by setting
\begin{align*}
&U:=(u_1,u_2,u_{1t},u_{2t}),\\ 
&U_0:=(u_{10},u_{20},u_{11},u_{21}),\\ 
&\aa^*(u_1,u_2,v_1,v_2):=(v_1,v_2,\Delta u_1-Au_1-Cu_2,-\Delta^2 u_2-Bu_1-Du_2)
\intertext{and}
&\bb^*(u_1,u_2,v_1,v_2):=(u_1(\xi ),u_2(\eta )).
\end{align*}
We introduce the dual space of the  Hilbert spaces $\hh_{\xi, \eta}$ denoted by $\hh'_{\xi, \eta}:=D^0_{\xi }\times D^0_{\eta }\times D^{-1}_{\xi }\times D^{-2}_{\eta }$, $\gg:=\RR^2$ and we define the domain of definition of the linear operators $\aa^*$ and $\bb^*$ by
\begin{equation*}
D(\aa^*)=D(\bb^*)=D^1_{\xi }\times D^2_{\eta }\times D^0_{\xi }\times D^0_{\eta }.
\end{equation*}

\begin{proposition}\label{PA}
the system \eqref{31a} verify the following  four assumptions:

\begin{description}
\item[(H1)] The operator $\aa^*$ generates a {\em strongly continuous group} of
automorphisms $e^{t\aa^*}$ in $\hh'_{\xi, \eta}$.

\item[(H2)]  $D(\aa^* )\subset D(\bb^* )$, and there exists a constant $c$ such that
$\norm{\bb^* U_0}_{\gg}\le c\norm{\aa^* U_0}_{\hh'{\xi, \eta}}$ for all $U_0\in D(\aa^* )$.

\item[(H3)]  There exist a non degenerate bounded interval $I$ and a constant $c_I$ such that the solutions of 
\eqref{a2}
satisfy the inequality 
\begin{equation*}
\norm{\bb^* U}_{L^2(I;\gg)} \le c_I\norm{U_0}_{\hh'{\xi, \eta}}
\end{equation*}
for all $U_0\in D(\aa^* )$.

\item[(H4)] There exists a bounded interval $I'$ and a positive number $c'$ such that 
the solutions of \eqref{a2} 
satisfy the inequality
\begin{equation*}
\norm{U_0}_{\hh'_{\xi, \eta}}\le c'\norm{\bb* U}_{L^2(I';\gg)}
\end{equation*}
for all $U_0\in D(\aa )$.
\end{description}
\end{proposition}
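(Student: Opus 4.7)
My plan is to verify each of the four hypotheses (H1)--(H4) in turn, relying throughout on the diagonalisation of the uncoupled part of $\aa^*$ in the orthonormal basis $\{w_k\}$.

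For (H1), I would first treat the uncoupled operator (take $A=B=C=D=0$). In the basis $\{w_k\}$ the wave block has purely imaginary spectrum $\{\pm ik\}$ and the plate block has spectrum $\{\pm ik^2\}$. Equipping the phase space $\hh'_{\xi,\eta}$ with the given weighted energy norm, one checks by direct coefficient computation that the uncoupled operator is skew-adjoint, so Stone's theorem yields a unitary group. The coupling contributions $-Au_1-Cu_2$ and $-Bu_1-Du_2$ affect only the last two components of $\aa^*$; I would verify, using the arithmetic assumption on $\xi/\pi,\eta/\pi$ and the elementary inequality $k^{-2}\le 1$, that they map $D^0_\xi\times D^0_\eta$ boundedly into $D^{-1}_\xi\times D^{-2}_\eta$, so the bounded-perturbation theorem upgrades the unitary group to a strongly continuous group for the coupled $\aa^*$.

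For (H2), the proof is a Cauchy--Schwarz estimate on the Fourier series at the observation point. Writing $u_{10}=\sum \tilde a_k w_k$,
\begin{equation*}
|u_{10}(\xi)|\le \sqrt{\tfrac{2}{\pi}}\sum|\tilde a_k||\sin k\xi|
\le \sqrt{\tfrac{2}{\pi}}\Bigl(\sum k^{-2}\Bigr)^{1/2}\Bigl(\sum k^{2}\sin^{2}(k\xi)|\tilde a_k|^{2}\Bigr)^{1/2},
\end{equation*}
so $|u_{10}(\xi)|\lesssim \norm{u_{10}}_{D^1_{\xi}}$, and analogously $|u_{20}(\eta)|\lesssim \norm{u_{20}}_{D^2_{\eta}}$ via $\sum k^{-4}<\infty$. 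The identities $\norm{\Delta u_{10}}_{D^{-1}_{\xi}}=\norm{u_{10}}_{D^{1}_{\xi}}$ and $\norm{\Delta^2 u_{20}}_{D^{-2}_{\eta}}=\norm{u_{20}}_{D^{2}_{\eta}}$, combined with absorption of the lower-order coupling terms by the energy norm (with any nonzero term of lower order absorbed by the dominant Laplacian-type term), then yield $\norm{\bb^* U_0}_{\gg}\le c\norm{\aa^* U_0}_{\hh'_{\xi,\eta}}$ as required.

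Assertions (H3) and (H4) are both immediate from the two-sided estimate \eqref{32a}: upon choosing $I=I'=[0,T]$ for any $T>2\pi$, one recognises the right-hand side of \eqref{32a} as exactly $\norm{U_0}_{\hh'_{\xi,\eta}}^{2}$, so the upper bound there gives (H3) and the lower bound gives (H4). The main obstacle of the whole proposition therefore sits inside (H4), i.e., the lower inequality in \eqref{32a}. The combined spectrum $\{\pm ik\}\cup\{\pm ik^{2}\}$, $k\ge 1$, does not satisfy a uniform gap condition (the pairs $\pm ik$ are arbitrarily close, and $ik$ collides with $ik^{2}$ at $k=1$), so the classical Ingham inequality is not directly available: one must invoke Haraux's generalised Ingham inequality, which extracts the asymptotic gap (of size one) and pays the price of exceptional terms at the cost of the sharp critical time $T=2\pi$. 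This is precisely where the restriction $T>2\pi$ in the main theorem enters.
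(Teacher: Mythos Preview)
Your outline is sound for the \emph{uncoupled} problem ($A=B=C=D=0$), and your reading of (H3)--(H4) as the two halves of the estimate \eqref{32a} is exactly how the paper proceeds. Note, however, that the paper does not give a self-contained proof of this proposition: it derives \eqref{32a} using the explicit uncoupled solution formulas $e^{\pm ikt}$, $e^{\pm ik^{2}t}$, and then refers to Komornik's abstract framework and to \cite{alia1} for the full verification in the coupled setting.

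There is a genuine gap in your treatment of the coupling. In (H1) you assert that the zero-order terms $(u_1,u_2)\mapsto(-Au_1-Cu_2,\,-Bu_1-Du_2)$ act boundedly from $D^0_\xi\times D^0_\eta$ into $D^{-1}_\xi\times D^{-2}_\eta$. The diagonal pieces $A u_1$ and $D u_2$ are fine, but the \emph{cross} terms are not. For instance, boundedness of $u_2\mapsto C u_2$ from $D^0_\eta$ to $D^{-1}_\xi$ would require a uniform estimate
\[
k^{-2}\sin^{2}(k\xi)\ \le\ C\,\sin^{2}(k\eta)\qquad(k\ge 1),
\]
which, since $\sin^{2}(k\xi)\le 1$, amounts to $|\sin(k\eta)|\gtrsim k^{-1}$. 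This is a diophantine condition on $\eta$; the sole hypothesis here is that $\eta/\pi$ is irrational, which guarantees only $\sin(k\eta)\neq 0$ and gives no such lower bound (it fails for Liouville-type $\eta$). The symmetric term $Bu_1$ into $D^{-2}_\eta$ has the same defect with the roles of $\xi,\eta$ reversed. Consequently the bounded-perturbation route to (H1) does not go through, and the same obstruction undermines the ``absorption of lower-order coupling terms'' step in your (H2).

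The resolution in the literature is not a perturbation argument in these weighted spaces but a direct spectral analysis of the coupled operator mode by mode, together with the vector-valued Ingham theorems of Komornik--Loreti \cite{KomLor1998}, \cite{KomLorBook}. That machinery simultaneously yields the group property and the two-sided estimate replacing \eqref{32a} for the coupled dynamics, and it is precisely the source of the restriction ``for almost all $(A,B,C,D)$'' in Theorems~\ref{t1} and~\ref{t2}. Your invocation of Haraux's generalised Ingham inequality correctly handles the lack of a uniform gap in $\{\pm k\}\cup\{\pm k^{2}\}$, but it does not by itself address the coupling; you should replace the bounded-perturbation step by the Komornik--Loreti argument (or, at minimum, restrict to $A=B=C=D=0$, for which your sketch is correct).
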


Proposition \ref{PA} is  an application of the abstract Komornik's method \cite{Kom1997}. See also \cite{alia1} for a proof of this proposition.

\begin{proposition}\label{p2}
Assume $(H1)-(H3)$ and fix $T>0$ arbitrarily. For any given $X_0=(y_{10},y_{11},y_{20},y_{21})\in \hh_{\xi,\eta}$ and $u=(v_1,v_2)\in L^2(0,T;\RR^2),$ the problem \eqref{13}
has a unique weak solution $X=(y_{1},y_{1t},y_{2},y_{2t})\in C(0, T, \hh_{\xi,\eta}),$ and the linear mapping $(X_0,u)\mapsto X$ is continuous with respect to these topologies.
\end{proposition}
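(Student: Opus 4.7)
The plan is to use the transposition (duality) method of Lions--Komornik. The key observation is that the admissibility estimate (H3) for the adjoint problem \eqref{31a} is precisely what is needed to give a variational meaning to the Dirac source terms $v_1(t)\delta_\xi$ and $v_2(t)\delta_\eta$ in \eqref{13}, and to identify $X(t)$ in the ``rough'' space $\hh_{\xi,\eta}$, which is not a standard Sobolev space.

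First I would dispose of the homogeneous part. By (H1), $\aa^*$ generates a strongly continuous group on $\hh'_{\xi,\eta}$; passing to the adjoint group on the dual yields a strongly continuous group $\{e^{-t\aa}\}$ on $\hh_{\xi,\eta}$, whose trajectory $t\mapsto e^{-t\aa}X_0$ is the unique $C^0$-solution of \eqref{13} when $(v_1,v_2)\equiv 0$. By linearity it then suffices to construct the inhomogeneous solution corresponding to $X_0=0$.

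For the inhomogeneous part I would define $X(t)\in\hh_{\xi,\eta}$ for each $t\in[0,T]$ by means of the Riesz representation theorem applied to the linear functional
$$L_t(U_0) := \int_0^t \bigl\langle (v_1(s),v_2(s)),\,\bb^*e^{(t-s)\aa^*}U_0\bigr\rangle_{\gg}\,ds,\qquad U_0\in\hh'_{\xi,\eta}.$$
The hypothesis (H3), which by the group property of $e^{t\aa^*}$ together with a finite covering argument extends from the interval $I$ to $[0,T]$, gives the bound
$$\abs{L_t(U_0)}\le c_T\,\norm{(v_1,v_2)}_{L^2(0,T;\RR^2)}\,\norm{U_0}_{\hh'_{\xi,\eta}},$$
so $L_t$ is continuous on $\hh'_{\xi,\eta}$ and is represented by a unique $X(t)\in\hh_{\xi,\eta}$ satisfying $\norm{X(t)}_{\hh_{\xi,\eta}}\le c_T\norm{(v_1,v_2)}_{L^2(0,T;\RR^2)}$. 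Continuity of the data-to-solution map is built into this construction.

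To obtain $X\in C([0,T];\hh_{\xi,\eta})$ I would split, for $t_1<t_2$, the difference $L_{t_2}(U_0)-L_{t_1}(U_0)$ into an integral over $[t_1,t_2]$ (small by absolute continuity of the integral combined with (H3)) and a correction over $[0,t_1]$ expressible through $(e^{(t_2-t_1)\aa^*}-I)$ acting on the adjoint trajectory; strong continuity of the group and dominated convergence force the correction to zero. Verifying that the resulting $X$ is a weak solution of \eqref{13} is then a matter of testing against $U_0\in D(\aa^*)$, for which $\bb^*e^{\tau\aa^*}U_0=(u_1(\tau,\xi),u_2(\tau,\eta))$ is a classical pointwise trace by (H2), and recognising the transposition identity as the variational form of \eqref{13} with Dirac sources. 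Uniqueness is immediate: if $X_0=0$ and $(v_1,v_2)=0$ then $L_t\equiv 0$, hence $X\equiv 0$. I expect the main obstacle to be the bookkeeping in the continuity-in-time argument and the precise matching of the transposition identity with the distributional formulation of \eqref{13} in the non-standard spaces $\hh_{\xi,\eta}$; the analytic heart of the matter---the hidden trace regularity of the adjoint at $\xi$ and $\eta$---is already packaged inside (H3) and requires no further work here.
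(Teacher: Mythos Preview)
Your proposal is correct and follows essentially the same transposition (duality) approach as the paper: rewrite \eqref{13} as $X'=\aa X+\bb u$, define the weak solution through the duality identity $\langle X(T),U(T)\rangle=\langle X_0,U_0\rangle+\int_0^T\langle u(s),\bb^*U(s)\rangle\,ds$ against solutions of the adjoint problem \eqref{31a}, and use (H1)--(H3) to justify the definition. Your write-up is in fact more detailed than the paper's own proof, which simply records the transposition identity and then defers the existence, uniqueness and estimate to a subsequent lemma and to the abstract theory in \cite{Kom1997}.
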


\begin{proof}
We rewrite \eqref{13} as a linear evolution problem

\begin{equation}\label{a1}
X'=\aa X + \bb u,\quad X(0)=X_0,
\end{equation}
We can see \cite{Kom1997} for the necessity of the assyptions $(H1)-(H3)$ in the abstract form. Next we define the solution of \eqref{13} by transposition. Fix $X_0\in \hh_{\xi,\eta}$ and $u=(v_1,v_2)\in L^2(0,T;\RR^2)$  arbitrary. Multiply the equation \eqref{a1} by the solution $U$ of the equation in \eqref{a2}. Integrating by part formally between $0$ and $T\in \RR$, we easily obtain the identity
\begin{equation}\label{a3}
<X(T), U(T)>_{\hh_{\xi,\eta},\hh'_{\xi,\eta}}= <X_0,U_0>_{\hh_{\xi,\eta},\hh'_{\xi,\eta}}+\int_0^T <u(s), \bb^* U>_{\RR^2}\quad ds.
\end{equation} 

Hence we define a solution of \eqref{a1}  as a continuous function $X: \RR\longrightarrow \hh_{\xi,\eta}$ satisfying the identity \eqref{a3}  for all $U_0\in \hh'_{\xi,\eta}$ and for all $T\in \RR.$ This definition is justified by the following lemma.

\end{proof}

\begin{lemma}\label{t3}
Assume $(H1)-(H4)$. For any given $X_0,\in \hh_{\xi,\eta}$ and $u\in L^2(0,T; \RR^2)$, the problem \eqref{a1} has a unique solution. Moreover, we have the estimates. 
\begin{equation}
\norm{X}_{L^{\infty}(0,T;\hh_{\xi,\eta})} \le M_{T}(\norm{X_0}_{\hh_{\xi,\eta}}+ \norm{u}_{L^2(0,T; \RR^2})
\end{equation}
with some constant $M_T$ which does not depend on the particular choice of $X_0$ and for all $T>0$.  
\end{lemma}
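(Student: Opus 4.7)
My plan is to construct $X$ by the \emph{transposition method}, using the identity \eqref{a3} as the \emph{definition} of $X(t)\in\hh_{\xi,\eta}$ and then upgrading the ensuing bound to continuity in time by a density argument. The point of hypothesis (H1) is that $\aa^*$ generates a \emph{group} on $\hh'_{\xi,\eta}$, so every element of $\hh'_{\xi,\eta}$ is the value $U(t)$ of some solution of the adjoint problem \eqref{a2}, which is what lets us parametrise the test functions by their terminal values.

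Fix $t\in[0,T]$ and $W\in\hh'_{\xi,\eta}$, and set $U_0:=e^{-t\aa^*}W$, so that the solution of \eqref{a2} starting from $U_0$ is $U(s)=e^{(s-t)\aa^*}W$ and in particular satisfies $U(t)=W$. I introduce the candidate linear form
\[
\Lambda_t(W) := \langle X_0,U_0\rangle_{\hh_{\xi,\eta},\hh'_{\xi,\eta}} + \int_0^t \langle u(s),\bb^* U(s)\rangle_{\RR^2}\,ds,
\]
which is exactly what \eqref{a3} forces the value $\langle X(t),W\rangle$ to be. The first summand is bounded by $\norm{X_0}_{\hh_{\xi,\eta}}\,\norm{e^{-t\aa^*}}_{\mathrm{op}}\,\norm{W}_{\hh'_{\xi,\eta}}$, and the operator norm is locally bounded in $t$ by (H1). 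For the second, Cauchy--Schwarz combined with (H3) gives
\[
\Bigl|\int_0^t \langle u(s),\bb^* U(s)\rangle_{\RR^2}\,ds\Bigr| \le \norm{u}_{L^2(0,t;\RR^2)}\,\norm{\bb^* U}_{L^2(0,t;\gg)},
\]
and covering $[0,t]$ by finitely many translates of the interval $I$ of (H3), again combined with the group property, produces $\norm{\bb^* U}_{L^2(0,t;\gg)}\le c_t\,\norm{W}_{\hh'_{\xi,\eta}}$ with $c_t$ depending continuously on $t$. Hence $\Lambda_t$ is a bounded linear form on $\hh'_{\xi,\eta}$, and the Riesz representation theorem produces a unique $X(t)\in\hh_{\xi,\eta}$ obeying \eqref{a3} together with
\[
\norm{X(t)}_{\hh_{\xi,\eta}} \le M_T\bigl(\norm{X_0}_{\hh_{\xi,\eta}}+\norm{u}_{L^2(0,T;\RR^2)}\bigr)
\]
uniformly in $t\in[0,T]$, which is the claimed inequality.

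Uniqueness is then automatic: any two candidate solutions of \eqref{a3} pair identically against every $W\in\hh'_{\xi,\eta}$ and hence coincide as elements of $\hh_{\xi,\eta}$. The main technical obstacle is upgrading this bound to genuine time continuity, $X\in C([0,T];\hh_{\xi,\eta})$. For this step I would use a density argument: when $X_0\in D(\aa)$ and $u$ is smooth in time, the semigroup theory attached to (H1) produces a strong solution of \eqref{a1} lying in $C([0,T];\hh_{\xi,\eta})$, and a routine integration by parts shows that this strong solution satisfies \eqref{a3}, hence coincides with the transposition solution just constructed. Approximating an arbitrary $(X_0,u)\in\hh_{\xi,\eta}\times L^2(0,T;\RR^2)$ by such regular data and invoking the uniform estimate exhibits a Cauchy sequence of strong solutions in $C([0,T];\hh_{\xi,\eta})$; its limit is necessarily the $X$ defined above, and this concludes the proof.
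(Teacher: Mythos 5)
Your argument is correct and is essentially the transposition/admissibility argument that the paper itself relies on (the lemma is stated without an explicit proof, the authors deferring to Komornik's abstract framework in \cite{Kom1997}): define $X(t)$ through the identity \eqref{a3}, bound the resulting linear form via (H1)--(H3), apply Riesz representation, and recover time continuity by density of regular data. Note only that (H4) plays no role in your proof, so the statement in fact holds under (H1)--(H3) alone, consistently with Proposition \ref{p2}.
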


 In order to prove the stabilization estimate, we need to recall a general result proved in \cite{Kom1997}. 
Fix two numbers $T>\abs{I'}$,  $\omega >0$, set
$T_\omega=T+(2\omega)^{-1}$,  define
\begin{equation*}
e_\omega (s)=
\begin{cases}
e^{-2\omega s}&\text{if $0\le s\le T$,}\\
2\omega e^{-2\omega T}(T_\omega -s)&\text{if $T\le s\le T_\omega$,}
\end{cases}
\end{equation*}
and set
\begin{equation*}
\langle \Lambda _{\omega} U _0,\tilde U_0\rangle_{\hh^{\prime},\hh}:=
\int _0^{T_\omega} e_\omega (s)
(\bb e^{s\aa }U _0,\bb e^{s\aa }\tilde U_0)_{\gg}\ d s.
\end{equation*}
Then $\Lambda _\omega $ is a self-adjoint, positive definite  isomorphism 
$\Lambda _{\omega} \in L(\hh,\hh)')$. Let us denote by $J:\gg\to \gg'$ the 
canonical Riesz anti-isomorphism.

The following result is a special case of a theorem obtained in
\cite{Kom1997}.

\begin{theorem}\label{t5}
Assume (H1)-(H4) and fix $\omega >0$ arbitrarily. Then the problem
\begin{equation}
v'=(-\aa -\bb J\bb^{\star} \Lambda _\omega ^{-1})v,\qquad v(0)=v_0,\label{22}
\end{equation}
is well-posed in $\hh$. Furthermore, there exists a constant $M$ such that the 
solutions of 
\eqref{22} satisfy the estimates
\begin{equation}
\norm{v(t)}_{\hh} \le M\norm{v_0}_{\hh} e^{-\omega t}\label{122}
\end{equation}
for all $v_0\in \hh$ and for all $t\ge 0$.  
\end{theorem}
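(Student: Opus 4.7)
The strategy is to extract from $\Lambda_\omega$ an equivalent Lyapunov norm on $\hh$ and to derive both well-posedness and the decay rate $\omega$ from a single operator inequality.

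\emph{Step 1: $\Lambda_\omega$ is a norm-equivalence isomorphism.}
First I would verify the claim stated just before the theorem, namely that $\Lambda_\omega\in L(\hh',\hh)$ is a self-adjoint positive-definite isomorphism. Symmetry and non-negativity are read off from the integral definition. Boundedness from above follows from (H3) after covering $[0,T_\omega]$ by finitely many translates of the interval $I$, the time shifts being absorbed into the initial data via the group generated in (H1). The coercivity $\langle\Lambda_\omega U_0,U_0\rangle\ge c\norm{U_0}_{\hh'}^2$ is the key point and uses (H4): on $[0,T]$ the weight $e_\omega$ is bounded below by $e^{-2\omega T}$, and by the group property an interval of length $|I'|$ can be placed inside $[0,T]$, on which (H4) applies.

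\emph{Step 2: Lyapunov identity.}
Next, differentiating at $t=0$ on $D(\aa^*)$ the shift identity
\[
\langle \Lambda_\omega e^{t\aa^*}U_0, e^{t\aa^*}U_0\rangle = \int_t^{T_\omega+t} e_\omega(s-t)\,\norm{\bb^*e^{s\aa^*}U_0}_\gg^2\,ds
\]
and using $e_\omega(0)=1$ and $e_\omega(T_\omega)=0$ produces
\[
\langle (\Lambda_\omega\aa^*+\aa\Lambda_\omega)U_0,U_0\rangle = -\norm{\bb^*U_0}_\gg^2-\int_0^{T_\omega} e_\omega'(s)\,\norm{\bb^*e^{s\aa^*}U_0}_\gg^2\,ds.
\]
An elementary check shows $-e_\omega'(s)\ge 2\omega\, e_\omega(s)$ on all of $[0,T_\omega]$ (equality on $[0,T]$; the linear tail on $[T,T_\omega]$ is designed precisely so that the inequality persists), yielding the operator inequality
\[
\Lambda_\omega\aa^*+\aa\Lambda_\omega+\bb\bb^*\ \ge\ 2\omega\,\Lambda_\omega.
\]

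\emph{Step 3: Well-posedness and decay.}
I would then equip $\hh$ with the equivalent inner product $(v,w):=\langle v,\Lambda_\omega^{-1}w\rangle_{\hh,\hh'}$. In this inner product the candidate generator $-\aa-\bb J\bb^*\Lambda_\omega^{-1}$ is dissipative, and together with a routine perturbation argument Lumer--Phillips provides a $C_0$-semigroup on $\hh$. For the decay, set $E(t):=\langle v(t),\Lambda_\omega^{-1}v(t)\rangle$ and $w(t):=\Lambda_\omega^{-1}v(t)$. Differentiating along the closed-loop equation and invoking Step~2 gives
\[
E'(t) = -\langle (\Lambda_\omega\aa^*+\aa\Lambda_\omega+\bb\bb^*)w(t),w(t)\rangle - \norm{\bb^*w(t)}_\gg^2 \le -2\omega\,E(t),
\]
so $E(t)\le E(0)e^{-2\omega t}$. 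Combined with the norm equivalence of Step~1 this produces \eqref{122}.

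\emph{Main obstacle.}
The real difficulty lies in giving precise meaning to the feedback $\bb J\bb^*\Lambda_\omega^{-1}$ despite the unboundedness of $\bb^*$: each formal step above must first be carried out on $D(\aa^*)$ and then extended to $\hh$ using (H2)--(H3) together with the coercivity of $\Lambda_\omega$ to absorb the loss of regularity. This delicate functional setting is exactly what the abstract treatment in \cite{Kom1997} is devoted to.
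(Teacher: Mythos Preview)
The paper does not actually prove this theorem: it is quoted verbatim as ``a special case of a theorem obtained in \cite{Kom1997}'' and used as a black box. Your sketch, by contrast, reconstructs the argument of \cite{Kom1997} itself: the equivalence of $\langle\Lambda_\omega\cdot,\cdot\rangle$ with the $\hh'$-norm via (H1)--(H4), the differential identity for $\langle\Lambda_\omega e^{t\aa^*}U_0,e^{t\aa^*}U_0\rangle$ exploiting $e_\omega(T_\omega)=0$ and $-e_\omega'\ge 2\omega e_\omega$, and the resulting Lyapunov inequality $E'\le -2\omega E$ for $E(t)=\langle v(t),\Lambda_\omega^{-1}v(t)\rangle$. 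These are precisely the ingredients of Komornik's proof, and your computations in Steps~2 and~3 are correct.

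Your closing paragraph is also on target: the genuine work in \cite{Kom1997} is not the formal Lyapunov calculation but making sense of the closed-loop generator $-\aa-\bb J\bb^*\Lambda_\omega^{-1}$ when $\bb^*$ is unbounded, and justifying the differentiation of $E(t)$ for general data. So your proposal is correct and faithfully mirrors the source the paper cites; it simply goes further than the paper itself, which is content to invoke the reference.
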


In other words, this theorem asserts that the {\em feedback law}  
\begin{equation}
\label{feedblaw}
W=-J\bb^{\star} \Lambda _\omega^{-1}v
\end{equation}
uniformly stabilizes the control problem 
\begin{equation*}
v'=-\aa v+\bb W,\quad v(0)=v_0
\end{equation*}
with a decay rate at least equal to $\omega$.

The well-posedness means here that \eqref{22} has a unique solution
$v\in  C({\RR};\hh)$ for every $v_0\in \hh$.

\subsection{proof of theorem\ref{t2}}

Since hypothesis $(H1)-(H4)$ are all satisfied, we may apply theorem\ref{t5}. In order to write down explicitly the stabilization result, we multiply the equation \eqref{13} by $u$ and we integrate by parts as follows (we use all conditions in \eqref{13} and \eqref{31a}).
This shows that if we write \eqref{31a} in the form \eqref{a1}, then its dual \eqref{a2} corresponds to \eqref{13}.  Furthermore, writing the 
operator
\begin{equation*}
 \Lambda_{\omega}^{-1}:(D_{\xi }^{0})'\times(D_{\xi }^{-1})'\times(D_{\eta }^0)'\times(D_{\eta }^{-2})'\to D_{\xi }^{-1}\times D_{\xi }^0\times D_{\eta }^{-2}\times D_{\eta }^{0}
\end{equation*}
in the matrix form
\begin{equation*}
\Lambda_{\omega}^{-1}=
\begin{pmatrix}
 -P_1&Q_1&\Lambda_{13}&\Lambda_{14} \\
 \Lambda_{21}&\Lambda_{22}&-P_2&Q_2\\
  \Lambda_{31}&\Lambda_{32}&\Lambda_{33}&\Lambda_{34} \\
  \Lambda_{41}&\Lambda_{42}&\Lambda_{43}&\Lambda_{44}
\end{pmatrix},
\end{equation*}
we have 
\begin{equation}
(v_1(t),v_2(t))=-((P_1y_{1t}+Q_1y_1)(t,\xi ),(P_2y_{2t}+Q_2y_2)(t,\eta )).
\end{equation}

\vspace{2 cm}

\section{Numerical Approximation}

To perform a numerical computation we use a Faedo-Galerkin method. This allows us to approach numerically the operator $\Lambda_{\omega}$ using the family of the functions $w_k(x):=\sqrt{2/\pi}\sin kx$. An approximate solution $y^N=(y_1^N,y_2^N)$ of the coupled system (\ref{13}) is a solution of the problem :

\begin{equation}
\label{etatN}
\begin{cases}
y_{1,tt}^N-y_{1,xx}^N+Ay_1^N+Cy_2^N=v_1^N(t)\delta_{\xi }\quad&\text{in $(0,T) \times (0,\pi)$,} \\
y_{2,tt}^N+y_{2,xxxx}^N+By_1^N+Dy_2^N=v_2^N(t)\delta_{\eta }\quad&\text{in $(0,T)\times (0,\pi)$,} \\
y_1^N(t,0)=y_1^N(t,\pi)=0&\text{for  $t\in (0,T)$,} \\
y_2^N(t,0)=y_2^N(t,\pi)=0&\text{for  $t\in (0,T)$,} \\
y_{2,xx}^N(t,0)=y_{2,xx}^N(t,\pi)=0&\text{for  $t\in(0,T)$,} \\
y_1^N(0,x)=y_{10}^N(x)\text{ and } y_{1,t}^N(0,x)=y_{11}^N(x)&\text{for  $x\in (0,\pi)$},\\ 
y_2^N(0,x)=y_{20}^N(x)\text{ and } y_{2,t}^N(0,x)=y_{21}^N(x)&\text{for  $x\in (0,\pi)$}
\end{cases}
\end{equation}
In order to compute the state feedback law $v^N(t) = (v_1^N(t),v_2^N(t)) = \mathcal{F}(y^N(t),\partial_t y^N(t))$,
an adjoint state is introduced : let $u^N(s,x)=(u_1^N(s,x),u_2^N(s,x))$ be the solution of the coupled adjoint system :
\begin{equation}\label{31adis}
\begin{cases}
u_{1ss}^N-u_{1xx}^N+Au_1^N+C u_2^N=0\quad&\text{in $(0,S) \times (0,\pi)$,} \\
u_{2ss}^N+u_{2xxxx}^N+Bu_1^N+Du_2^N=0\quad&\text{in $(0,S) \times (0,\pi)$,} \\
u_1^N(s,0)=u_1^N(s,\pi)=0&\text{for $s\in (0,S)$,} \\
u_2^N(s,0)=u_2^N(s,\pi)=0&\text{for $s\in (0,S)$,} \\
u_{2xx}^N(s,0)= u_{2xx}^N(s,\pi)=0&\text{for $s\in (0,S)$,} \\
u_1^N(0,x)=u_{10}^N(x)\text{ and } u_{1t}^N(0,x)=u_{11}^N(x)&\text{for $x\in (0,\pi)$,}\\
u_2^N(0,x)=u_{20}^N(x)\text{ and } u_{2t}^N(0,x)=u_{21}^N(x)&\text{for $x\in (0,\pi)$,}.
\end{cases}
\end{equation}
Where s denotes a fictitious time, and S a fictitious time horizon. The solution $U^N(s,x)=(u_1^N(s,x),u_2^N(s,x))$ of 
(\ref{31adis}) depends linearly on initial conditions  $U_0^N:=(u_{10}^N,u_{20}^N,u_{11}^N,u_{21}^N)$. Hence one can   define the approximate bilinear controllabiliy gramian for any solution $\tilde U^N = (\tilde u_1^N\,\tilde u_2^N )$ of
(\ref{31adis}) with initial conditions $\tilde U_0^N:=(\tilde u_{10}^N,\tilde u_{20}^N,\tilde u_{11}^N,\tilde u_{21}^N)$ and parameter $\omega$  as :
\begin{equation}
\label{gramN}
a_{\omega,S}^N(U_0^N,\tilde U_0^N ):= \int _0^S e^{- 2 \omega s}
( u_1^N(s,\xi) \tilde u_1^N(s,\xi) + u_2^N(s,\eta) \tilde u_2^N(s,\eta)) \; d s.
\end{equation}
As pointed out in \cite{Bour.et.al} we consider, from numerical point of vue, the function $e^{- 2 \omega s}$ initially introduced in \cite{Kom1995} and wich gives similar results as the general theory \cite{Kom1997}. \\
For any $Z=\{z_{10},z_{20},z_{11},z_{21}\}$ let $U_0^N=\{u_{10}^N,u_{20}^N,u_{11}^N,u_{21}^N\}$ be the unique solution of the variational equation :
\begin{equation}
a_{\omega,S}^N(U_0^N, Z ) = \int_0^{\pi} \left [ ( z_{11} u_{10}^N - z_{10} u_{11}^N) + ( z_{21} u_{20}^N - z_{20} u_{21}^N) ) \right ] dx  .
\end{equation}
Let know define the operator $\mathcal{L}_N$ by $\{u_{10}^N,u_{20}^N,u_{11}^N,u_{21}^N\} = \mathcal{L}_N (\{ z_{10},z_{20},z_{11},z_{21}\})$ and let also define : $\textit{P}_1 : \RR^4 \rightarrow \RR$ and $\textit{P}_2 : \RR^4 \rightarrow \RR$ respectively as the projection on the first and second component, i.e. 
$\textit{P}_1(\{a,b,c,d\})=a$ and $\textit{P}_2(\{a,b,c,d\})=b$. The feedback law $v^N(t)=(v_1^N(t),v_2^N(t))$ is then given by :
\begin{equation}
v_1^N(t)= - \textit{P}_1{\mathcal{L}_N}\{y_1^N,y_2^N,(y_1^N)',(y_2^N)'\}(x=\xi)  
\end{equation}
and 
\begin{equation}
v_2^N(t)= - \textit{P}_2{\mathcal{L}_N}\{y_1^N,y_2^N,(y_1^N)',(y_2^N)'\}(x=\eta))
\end{equation}
To compute the feedback $v_N(t)$ we use the expansion of initial conditions :
\begin{align*} 
u_{10}^N(x) &= \sum_{k=1}^{N} \alpha_k^0 \sin kx, &u_{11}^N(x)=&\sum_{k=1}^{N} \alpha_k^1 \sin kx 
\intertext{and}  
u_{20}^N(x) &=\sum_{k=1}^{N}\beta_k^0 \sin kx, &u_{21}^N(x)=&\sum_{k=1}^{N}\beta_k^1 \sin kx .
\end{align*}
an approximate solution of problem \eqref{31a} is then given by $u_1^N(t,x)=\sum_{k=1}^{N} \alpha_k(t) \sin kx$ and
$u_2^N(t,x)=\sum_{k=1}^{N} \beta_k(t) \sin kx$. With  :
\[
\alpha_k(t)=\frac{1}{2}(\alpha_k^0 - i \frac{\alpha_k^1}{k}) e^{ikt} 
+ \frac{1}{2}(\alpha_k^0 + i \frac{\alpha_k^1}{k}) e^{-ikt}  
\]
and
\[
\beta_k(t)=\frac{1}{2}(\beta_k^0 - i \frac{\beta_k^1}{k}) e^{ik^2t} 
+ \frac{1}{2}(\beta_k^0 + i \frac{\beta_k^1}{k}) e^{-ik^2t}  
\]
by setting $m_i=sin(i \xi) $ and $n_i = sin( i \eta) $ the second hand side of (\ref{gramN}) is given by : 
\begin{equation}
\label{KN}
 \displaystyle  \sum_{i=1}^N \sum_{j=1}^N \int _0^S e^{-2 \omega s} 
\; ( \alpha_i (s) \; \tilde \alpha_j (s) \;  m_i m_j + \beta_i (s) \; \tilde \beta_j (s) n_i n_j ) \; d s \;\; 
\end{equation}
Let us know define $\{\phi_N^0(t),\psi_N^0(t),\phi_N^1(t),\psi_N^1(t)\}$ as a solution at each time $t \geq 0$, of :
\begin{equation}
\{\phi_N^0(t),\psi_N^0(t),\phi_N^1(t),\psi_N^1(t)\}=\mathcal{L_N}\{y_1^N(t),y_2^N(t),(y_1^N(t))',(y_2^N(t))'\}
\end{equation}
\\
We can then write the operator $\mathcal{L}_{N}$ in a matrix form by setting :
\[
K_{\omega,4N}\{ \phi_N^0(t),\psi_N^0(t),\phi_N^1(t),\psi_N^1(t) \} = \{ (y_1^N(t))',(y_2^N(t))',- y_1^N(t),-y_2^N(t)\}
\]
Where the matrix $K_{\omega,4N}$ is defined by relation (\ref{gramN}),(\ref{KN}). Let us know define the inverse 
$K_{\omega,4N}^{-1}$ by a block matrix :
\[
K_{\omega,4N}^{-1} = ( K^{\alpha \beta} ) , \alpha,\beta=1..4 .
\]
the control law is then given by :
\[
v^N(t)=(v_1^N(t),v_2^N(t))=(\sum_{k=1}^{N}(\phi_0^N)_k m_k,\sum_{k=1}^{N}(\psi_0^N)_k n_k)
\]
Where $\phi_0^N(t) = K^{11} \alpha'(t) - K^{12} \alpha(t) $ and $\psi_0^N(t) = K^{23} \beta'(t) - K^{24} \beta(t) $
\smallskip
\\
We compute the solution of the coupled system for $N=7$ and value of parameters $A=3, B=2, C=1$ and $D=\frac{1}{2}$. 
The damping points are chosen as $\displaystyle \xi = \frac{\sqrt{2} }{3} $  and $\displaystyle \eta = \frac{\sqrt{2} }{4} $.
 We show the efficiency of the feedback for two values of parameter $\omega$ in figure 1 ($\omega=1$) and figure 2 ($\omega=10$). 
 
\begin{center}
\includegraphics[angle=0,width=6cm]{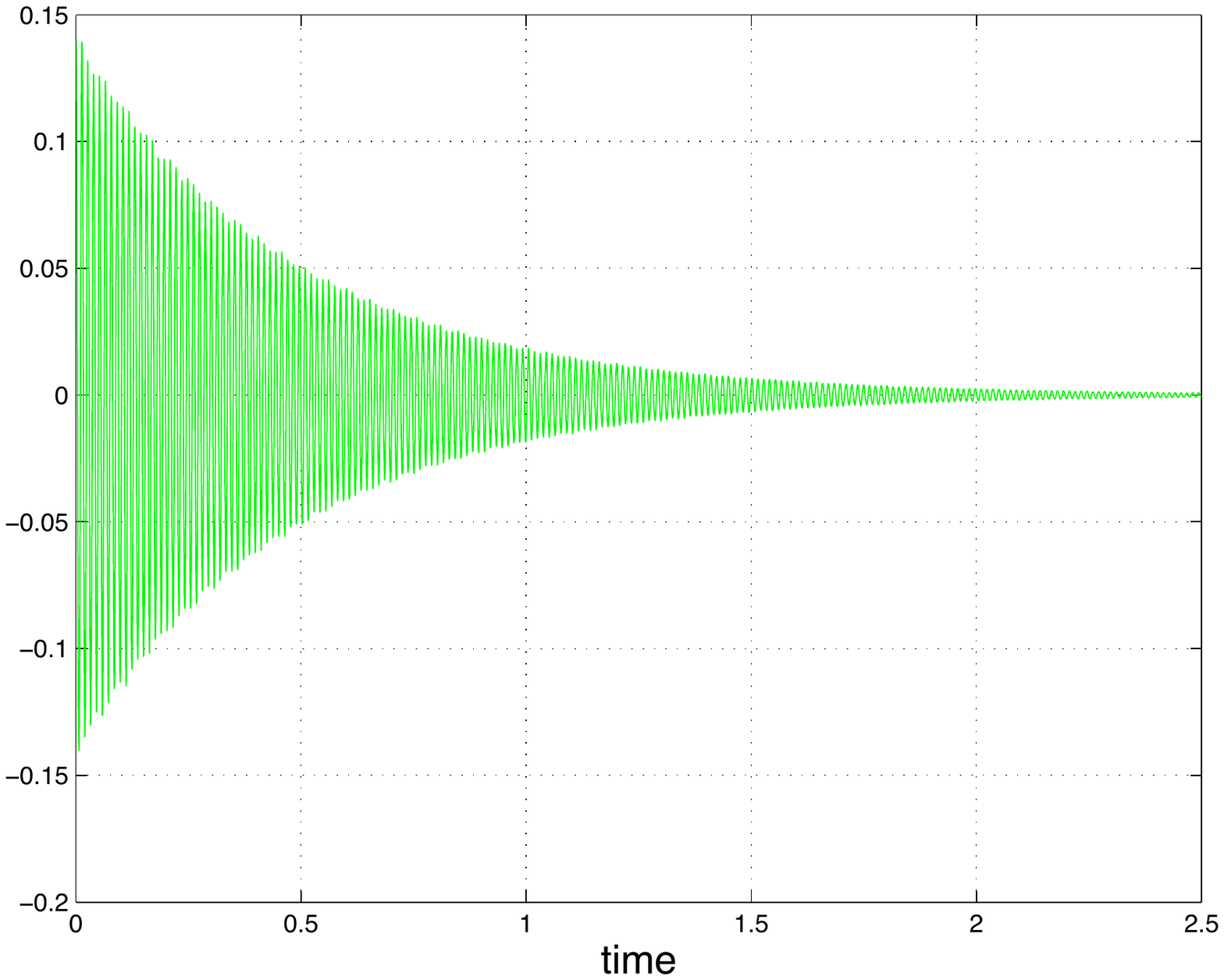}
\includegraphics[angle=0,width=6cm]{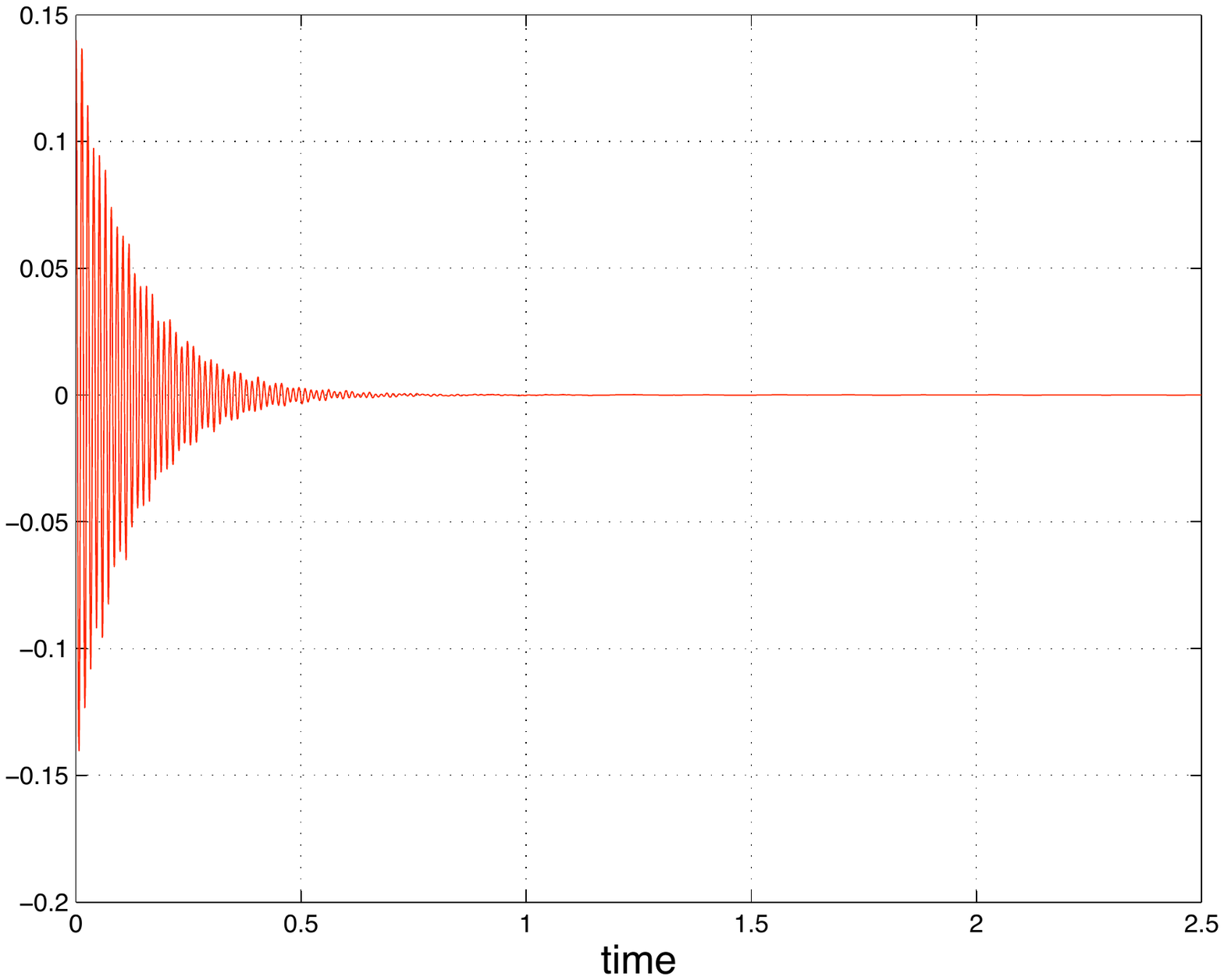}
figures 1 and 2 : solution of the coupled system , $\omega=1$ (left)  and  $\omega=10$ (right).
\end{center}
\bigskip
\begin{center}
\includegraphics[angle=0,width=6cm]{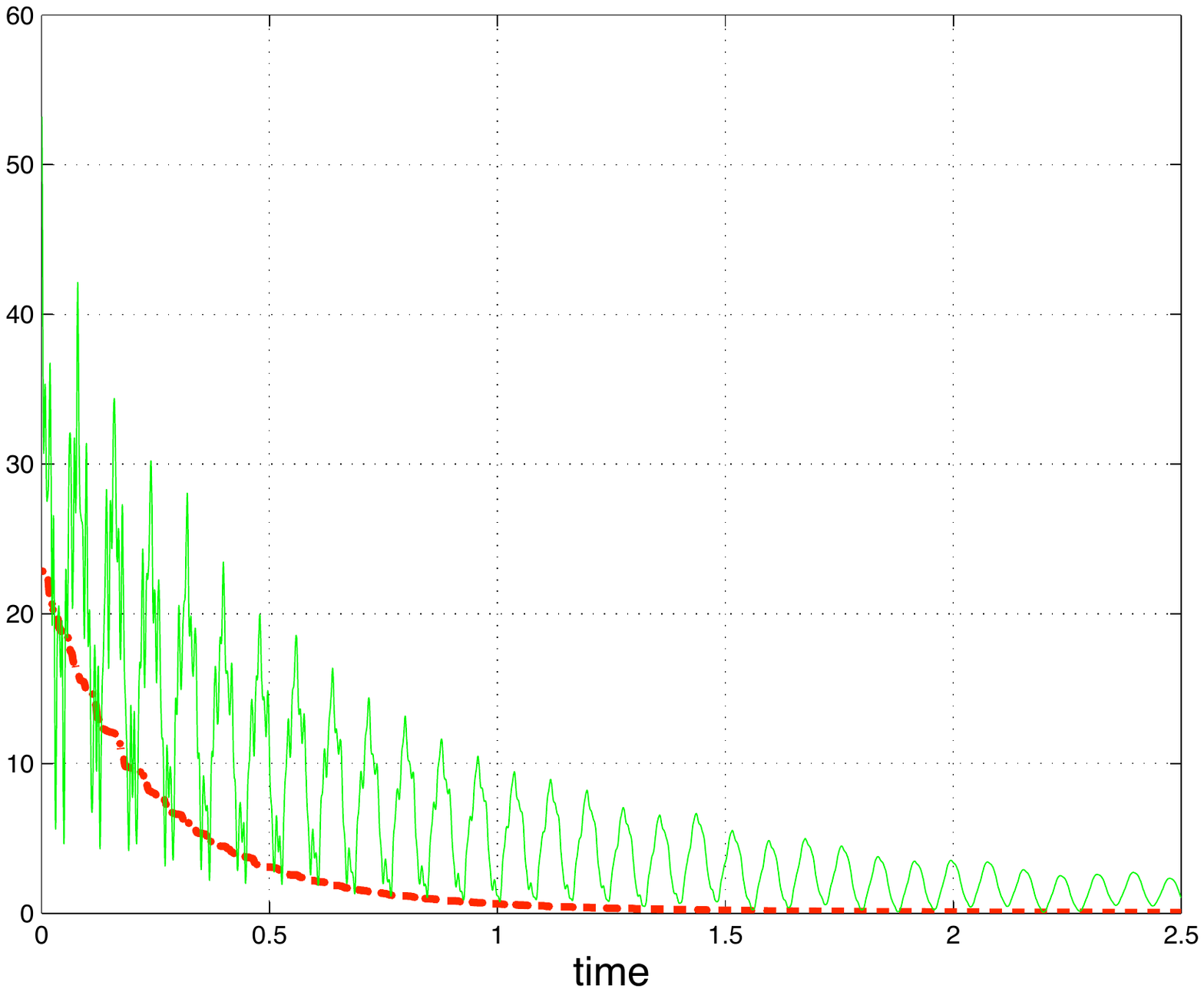}
\includegraphics[angle=0,width=6cm]{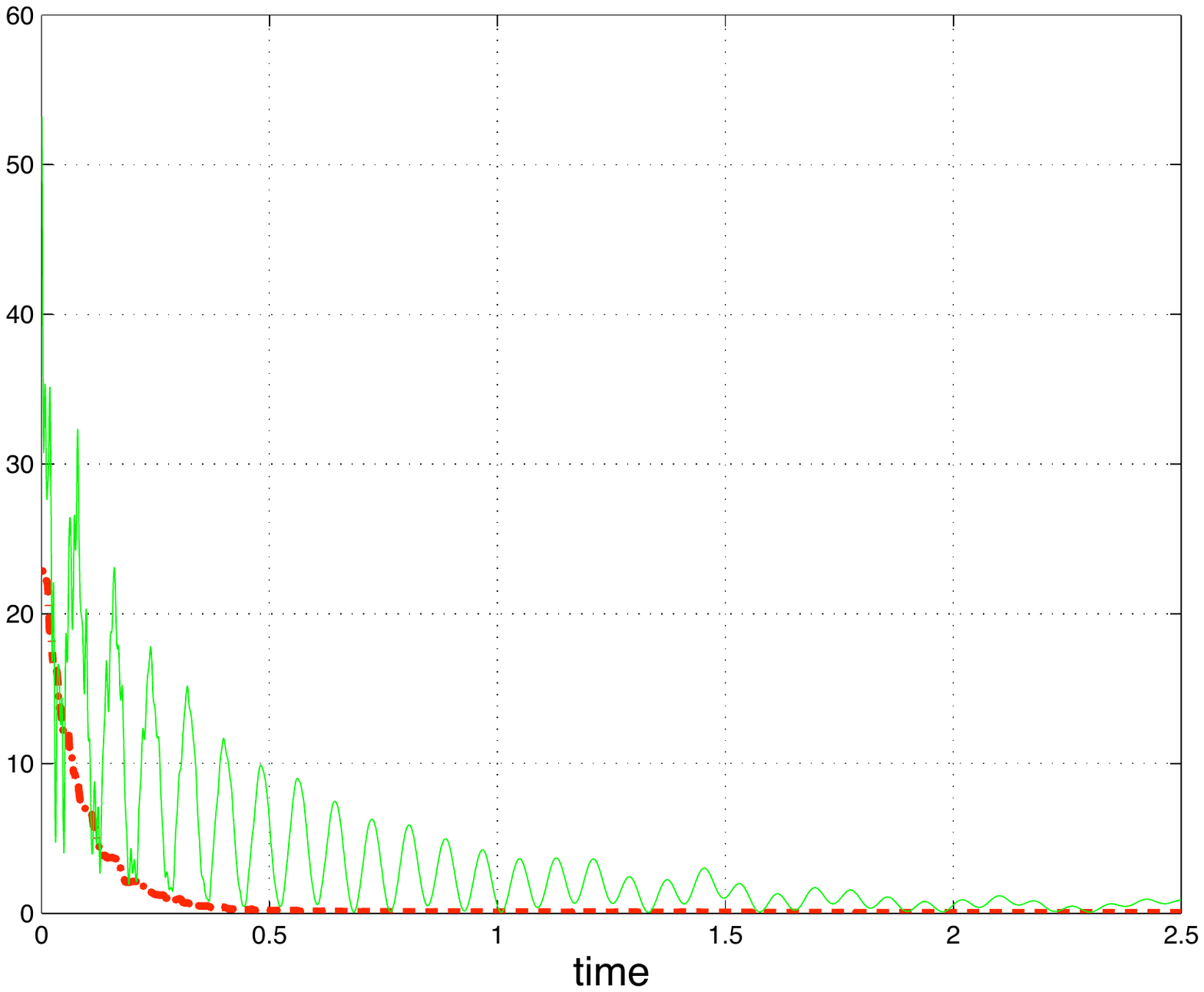}
figure 3 and 4 : Energy of the coupled system , $\omega=1$ (left)  and  $\omega=10$ (right).
\end{center}
\bigskip
\bigskip
\bigskip
In figures 3 and 4 we compare the energy of the coupled system in the space $ D^0_{\xi }\times D^1_{\xi }\times D^0_{\eta }\times  D^2_{\eta } $ ( solid line ) to the classical energy in the natural space $L^2(0,\pi) \times H_0^1(0,\pi) \times L^2(0,\pi) \times ( H^2(0,\pi) \cap H_0^1(0,\pi) )$ (dashed dot line) 
for two values of the parameter $\omega=1$ and $\omega=10$. The computations shows  that we have also uniform decay of the energy in the natural space.

\end{document}